\newtheorem{theorem}{Theorem}[section]
\newtheorem{corollary}[theorem]{Corollary}
\newtheorem{lemma}[theorem]{Lemma}
\newenvironment{proof}[1][Proof]{\noindent\textbf{#1.} }{\ \rule{0.5em}{0.5em}}
\begin{document}
\title{Killing frames and S-curvature of  homogeneous Finsler spaces\thanks{Supported by NSFC (no. 11221091, 11271198, 11271216) and SRFDP of China}}
\author{Ming Xu$^1$ and Shaoqiang Deng$^2$ \thanks{Corresponding author. E-mail: dengsq@nankai.edu.cn}\\
\\
$^1$College of Mathematics\\
Tianjin Normal University\\
 Tianjin 300387, P.R. China\\
 \\
$^2$School of Mathematical Sciences and LPMC\\
Nankai University\\
Tianjin 300071, P.R. China}
\date{}
\maketitle
\begin{abstract}
In this paper, we first deduce a formula of S-curvature of homogeneous Finsler spaces in terms of Killing vector fields. Then we prove that a homogeneous
Finsler space has isotropic S-curvature if and only if it has vanishing S-curvature. In the special case that the homogeneous Finsler space is a Randers space, we give an explicit formula which coincides with the previous formula obtained by the second author using other methods.

\textbf{Mathematics Subject Classification (2000)}: 22E46, 53C30.

\textbf{Key words}: Homogeneous Finsler spaces,  Killing vector fields, S-curvature.

\end{abstract}
\section{Introduction}
The notion of S-curvature was introduced by Z. Shen in \cite{Sh1} in his study of volume comparison in Finsler geometry. S-curvature is an important quantity in Finsler geometry in that it has some mysterious interrelations with other quantities such as flag curvature, Ricci scalar, etc.
Shen showed that the Bishop-Gromov volume  comparison theorem holds for Finsler spaces with vanishing S-curvature. Therefore, it is also significant to characterize Finsler spaces with vanishing S-curvature.

The goal of this article is to give an explicit formula of  S-curvature of a homogeneous Finsler space in terms of Killing vector fields.
Let $(M, g)$ be a connected Finsler space. Then the group of isometries of $(M, F)$, denoted by $I(M, F)$, is a Lie transformation group on $M$
with respect to the compact-open topology (see \cite{DH}).  A vector field $X$ on a Finsler space $(M, F)$ is called a Killing vector field, if the local one-parameters groups of transformations generated by $X$ consists of local isometries of $(M, F)$. A Killing vector field $X$ of $(M,F)$ can be equivalently described as follows. Any vector field $X$ on $M$ can naturally define a vector field $\tilde{X}$ on $TM$. The vector field $X$ generates a flow of diffeomorphisms $\rho_t$ on $M$, with
the corresponding flow of diffeomorphisms $\tilde{\rho_t}$ on $TM$. Then the value of $\tilde{X}$ at $(x,y)\in TM$ is just $\frac{d}{dt}[\tilde{\rho_t}(x,y)]|_{t=0}$. Obviously,  $X$
is a Killing vector field for $F$ if and only if $\tilde{X}(F)=0$.

 The space $(M, F)$ is called homogeneous if the action of $I(M, F)$ on $M$ is transitive.
In this case, $M$ can be written as a coset space $I(M, F)/I(M, F)_x$, where $I(M, F)_x$ is the isotropic subgroup of $I(M, F)$ at a  point of $x\in M$. Since $M$ is connected,
the unit connected component of $I(M, F)$, denoted by $G$, is also transitive on $M$. Let $H$ be the isotropic of $G$ at the point $x$. Then we have
$M=G/H$. Moreover, the Finsler metric $F$ can be viewed as an $G$-invariant Finsler metric on $G/H$.

Since $(M, F)$ is homogeneous, given any tangent vector $v\in T_y(M)$, $y\in M$, there exists a Killing vector field $X$ such that $X|_y=v$. Therefore, if we can get a formula for $S(X)$, where $X$ is an arbitrary Killing vector field, then the S-curvature of $(M, F)$ is completely determined.
The main result of this paper is a formula of the S-curvature as described above. As an application, we show that a homogeneous Finsler space has
istropic S-curvature if and only if it has vanishing S-curvature. This generalizes the similar results on homogeneous Randers spaces and Homogeneous $(\alpha,\beta)$-spaces in \cite{De} and \cite{DW}.

In Section 2, we present some preliminaries on Finsler spaces and S-curvature. Section 3 is devoted to  deducing the formula of  S-curvature. In Section 4, we apply our formula to homogeneous Randers spaces, and show that the formula coincides with the previous one obtained
in \cite{De} in this special case.
\section{Preliminaries}
In this section we recall some known results on Finsler spaces, for details we refer the readers to \cite{BCS}, \cite{CS} and \cite{Sh2}.

A Finsler metric on a manifold $M$ is a function $F:TM\backslash\{0\} \rightarrow {\mathbb R}^{+}$ satisfying the following properties:
\begin{enumerate}
\item\quad $F$ is  smooth  on $TM\backslash \{0\}$.
\item\quad $F$ is positively homogeneous of degree 1, namely, $F(\lambda y)=\lambda F(y)$, for any $\lambda >0$ and $y\in TM\backslash\{0\}$.
\item\quad For any standard local coordinate system $(TU, (x,, y)$ of $TM$, where $x=(x^i)$ in an small open neighborhood $U\subset M$, and $y=y^i\partial_{x^i}\in TM_x$, the fundamental tensor $g_{ij}(y)=\frac{1}{2}[F^2]_{y^i y^j}$ is positive definite whenever $y\neq 0$.
\end{enumerate}

Riemannian metrics are a special class of Finsler metrics widely studied by mathematicians. Their fundamental tensors only depends on $x$, which are regarded as the metrics themselves.

Randers metrics are the most well-known non-Riemannian Finsler metric. They are defined as $F=\alpha+\beta$, where $\alpha$ is a Riemannian metric and $\beta$ is a 1-form,
whose $\alpha$-norm $||\beta (x)||_\alpha$ is less than $1$ everywhere. Randers metrics are  generalized to $(\alpha,\beta)$-metrics of the form $F=\alpha\phi(\beta/\alpha)$.

\section{Killing frames and the geodesic spray}
A Killing frame for a Finsler manifold $(M,F)$ is a set of local vector fields $X_i$,
$i=1,\ldots,n=\dim M$, defined on an open subset $U$  around a given point,  such that
\begin{enumerate}
\item\quad The values $ X_i(x)$, $\forall i$, give bases for each tangent space $T_x(M)$, $x\in U$, and
\item\quad In $U$, each $X_i$ satisfies $\tilde{X}_i(F)=0$, in other words, the $X_i's$ are local Killing vector fields in $U$.
\end{enumerate}
Though Killing frames are rare in the general study of Finsler geometry, they can be easily found for a homogeneous Finsler space at any given point. Let the homogeneous Finsler space $(M,F)$ be presented as $M=G/H$, where $H$ is the isotropy subgroup
for the given $x$. The tangent space $TM_x$ can be identified as the quotient
$\mathfrak{m}=\mathfrak{g}/\mathfrak{h}$,  where $\mathfrak{g}$ and $\mathfrak{h}$ are
the Lie algebras of $G$ and $H$,  respectively. Take any basis $\{ v_1,\ldots,v_n \}$ of $\mathfrak{m}$, with the pre-images $\{\hat{v}_1,\ldots,\hat{v}_n\}$ in $\mathfrak{g}$. Then the Killing vector fields $\{X_1,\ldots,X_n\}$ on $M$ corresponding to $\hat{v}_i$s defines  a Killing frame around $x$. The choice of $\hat{v}_i$s or $X_i$s identifies the quotient space $\mathfrak{m}$ with a subspace of $\mathfrak{g}$, and then we can write
the decomposition of linear space
\begin{equation}
\mathfrak{g}=\mathfrak{h}+\mathfrak{m}.
\end{equation}

For the Killing frame $\{X_1,\ldots,X_n\}$ around $x\in M$, a set of $y$-coordinates $y=(y^i)$
can be defined by $y={y}^i X_i$.  Accordingly, we have the fundamental tensor
${g}_{ij}=\frac{1}{2}{[F^2]}_{{y}^i {y}^j}$, and the inverse matrix
of $({g}_{ij})$ is denoted as $({g}^{ij})$. When both the Killing frame and
the local coordinates $\{x=(\bar{x}^{\bar{i}}),y=\bar{y}^{\bar{j}}\partial_{\bar{x}^{\bar{j}}}\}$ are used,  the terms and indices for the local coordinates are marked with bars, and the indices with bars are moved up and down by the fundamental tensors  $\bar{g}^{\bar{i}\bar{j}}$ or $\bar{g}_{\bar{i}\bar{j}}$ for the local coordinates.
Let $f^i_{\bar{i}}$ and $f^{\bar{i}}_i$, $\forall i$ and $\bar{i}$, be the transition functions such that around $x$,
\begin{eqnarray}
\partial_{\bar{x}^{\bar{i}}} = f^i_{\bar{i}} X_i \mbox{ and }
X_i = f^{\bar{i}}_i\partial_{\bar{x}^{\bar{i}}}.
\end{eqnarray}
We summarize some easy and useful identities which show how the transition functions exchange the indices with and without bars:
\begin{eqnarray}
\bar{y}^{\bar{i}}=f^{\bar{i}}_i y^i &\mbox{ and }& y^i=f^i_{\bar{i}}\bar{y}^{\bar{i}}\\
\partial_{\bar{y}^{\bar{i}}}=f^{i}_{\bar{i}} \partial_{y^i} &\mbox{ and }&
\partial_{y^i}=f^i_{\bar{i}}\partial_{\bar{y}^{\bar{i}}},\\
\bar{g}_{\bar{i}\bar{j}}=f^i_{\bar{i}}g_{ij}f^j_{\bar{j}}&\mbox{ and }&
g_{ij}=f^{\bar{i}}_i \bar{g}_{\bar{i}\bar{j}} f^{\bar{j}}_j,\\
\bar{g}^{\bar{i}\bar{j}}=f^{\bar{i}}_i g^{ij} f^{\bar{j}}_j&\mbox{ and }&
g^{ij}=f^{i}_{\bar{i}}\bar{g}^{\bar{i}\bar{j}}f^j_{\bar{j}}.
\end{eqnarray}

To apply Killing frames to the study of Finsler geometry, we start with the geodesic spray.

\begin{theorem}\label{main-theorem-1}
Let $\{X_1,\ldots,X_n\}$ be a Killing frame around $x\in M$ for the Finsler metric $F$.
Then for $y=\tilde{y}^i X_i(x)\in TM_x$, the geodesic spray $G(x,y)$ can be presented as
\begin{equation}\label{geodesic-spray-formula}
G(x,y)={y}^i \tilde{X}_i+\frac{1}{2}g^{il}c^k_{lj}[F^2]_{y^k}y^j\partial_{y^i},
\end{equation}
where $c^k_{lj}$ are defined by $[X_l,X_j](x)=c^k_{lj}X_k(x)$.
\end{theorem}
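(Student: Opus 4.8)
The plan is to work in a standard local coordinate system $\{(\bar{x}^{\bar{i}},\bar{y}^{\bar{i}})\}$ on $TU$ and compare the two sides of \eqref{geodesic-spray-formula}. Recall the standard coordinate expression for the geodesic spray,
\[
G(x,y)=\bar{y}^{\bar{i}}\partial_{\bar{x}^{\bar{i}}}-2\bar{G}^{\bar{i}}\partial_{\bar{y}^{\bar{i}}},\qquad
2\bar{G}^{\bar{i}}=\tfrac12\bar{g}^{\bar{i}\bar{l}}\bigl([F^2]_{\bar{x}^{\bar{k}}\bar{y}^{\bar{l}}}\bar{y}^{\bar{k}}-[F^2]_{\bar{x}^{\bar{l}}}\bigr),
\]
and the formula for the natural lift to $TM$ of $X_i=f^{\bar{j}}_i\partial_{\bar{x}^{\bar{j}}}$, namely $\tilde{X}_i=f^{\bar{j}}_i\partial_{\bar{x}^{\bar{j}}}+\frac{\partial f^{\bar{j}}_i}{\partial\bar{x}^{\bar{k}}}\bar{y}^{\bar{k}}\partial_{\bar{y}^{\bar{j}}}$. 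Since $\bar{y}^{\bar{j}}=f^{\bar{j}}_iy^i$, one gets $y^i\tilde{X}_i=\bar{y}^{\bar{j}}\partial_{\bar{x}^{\bar{j}}}+P^{\bar{j}}\partial_{\bar{y}^{\bar{j}}}$ with $P^{\bar{j}}:=y^i\frac{\partial f^{\bar{j}}_i}{\partial\bar{x}^{\bar{k}}}\bar{y}^{\bar{k}}$; thus $y^i\tilde{X}_i$ already carries the horizontal part of $G(x,y)$, and \eqref{geodesic-spray-formula} reduces to an identity between vertical vectors,
\[
-\bigl(2\bar{G}^{\bar{j}}+P^{\bar{j}}\bigr)\partial_{\bar{y}^{\bar{j}}}=\tfrac12 g^{il}c^k_{lj}[F^2]_{y^k}y^j\partial_{y^i}.
\]

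The next step is to rewrite the right-hand side in the bar coordinates. Expanding $[X_l,X_j]=c^k_{lj}X_k$ in coordinates yields the transition identity $c^k_{lj}f^{\bar{b}}_k=f^{\bar{a}}_l\frac{\partial f^{\bar{b}}_j}{\partial\bar{x}^{\bar{a}}}-f^{\bar{a}}_j\frac{\partial f^{\bar{b}}_l}{\partial\bar{x}^{\bar{a}}}$. Combining this with the index-exchange identities between barred and unbarred quantities (applied to $g^{ij}$, to $\partial_{y^i}$ and $[F^2]_{y^k}$, and to $\bar{y}^{\bar{i}}=f^{\bar{i}}_iy^i$), a direct contraction converts the right-hand side into $\tfrac12\bar{g}^{\bar{c}\bar{b}}T_{\bar{b}}\partial_{\bar{y}^{\bar{c}}}$ with
\[
T_{\bar{b}}=y^j\frac{\partial f^{\bar{k}}_j}{\partial\bar{x}^{\bar{b}}}[F^2]_{\bar{y}^{\bar{k}}}-\bar{y}^{\bar{a}}f^l_{\bar{b}}\frac{\partial f^{\bar{k}}_l}{\partial\bar{x}^{\bar{a}}}[F^2]_{\bar{y}^{\bar{k}}}.
\]
Hence the claim becomes the scalar identity $\bar{G}_{\bar{b}}+T_{\bar{b}}=-2\bar{g}_{\bar{b}\bar{j}}P^{\bar{j}}$, where $\bar{G}_{\bar{b}}:=[F^2]_{\bar{x}^{\bar{k}}\bar{y}^{\bar{b}}}\bar{y}^{\bar{k}}-[F^2]_{\bar{x}^{\bar{b}}}$ so that $2\bar{G}^{\bar{c}}=\tfrac12\bar{g}^{\bar{c}\bar{b}}\bar{G}_{\bar{b}}$.

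Now the Killing condition enters. Writing out $\tilde{X}_i(F^2)=0$ gives $f^{\bar{j}}_i[F^2]_{\bar{x}^{\bar{j}}}+\frac{\partial f^{\bar{j}}_i}{\partial\bar{x}^{\bar{k}}}\bar{y}^{\bar{k}}[F^2]_{\bar{y}^{\bar{j}}}=0$; contracting with $f^i_{\bar{b}}$ shows that the second summand of $T_{\bar{b}}$ equals $[F^2]_{\bar{x}^{\bar{b}}}$, which cancels the $-[F^2]_{\bar{x}^{\bar{b}}}$ inside $\bar{G}_{\bar{b}}$ and leaves $\bar{G}_{\bar{b}}+T_{\bar{b}}=[F^2]_{\bar{x}^{\bar{j}}\bar{y}^{\bar{b}}}\bar{y}^{\bar{j}}+y^i\frac{\partial f^{\bar{j}}_i}{\partial\bar{x}^{\bar{b}}}[F^2]_{\bar{y}^{\bar{j}}}$. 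Differentiating the Killing identity with respect to $\bar{y}^{\bar{b}}$ and contracting with $y^i$ (using $\bar{y}^{\bar{j}}=f^{\bar{j}}_iy^i$ once more) shows that this last expression equals $-y^i\frac{\partial f^{\bar{j}}_i}{\partial\bar{x}^{\bar{k}}}\bar{y}^{\bar{k}}[F^2]_{\bar{y}^{\bar{j}}\bar{y}^{\bar{b}}}=-2\bar{g}_{\bar{b}\bar{j}}P^{\bar{j}}$, since $[F^2]_{\bar{y}^{\bar{j}}\bar{y}^{\bar{b}}}=2\bar{g}_{\bar{j}\bar{b}}$. This is precisely the identity we needed, which establishes \eqref{geodesic-spray-formula}. (Only the $1$-jet of the frame at $x$ enters the computation, so the values $c^k_{lj}$ of the structure functions at $x$ suffice; the same argument in fact proves the formula at every point of $U$.)

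I expect the only real obstacle to be the index bookkeeping — converting the right-hand side of \eqref{geodesic-spray-formula} to the bar coordinates, and keeping straight which of the three incarnations of the Killing condition is needed at each stage: the bare equation $\tilde{X}_i(F^2)=0$, its contraction with $f^i_{\bar{b}}$, and its $\bar{y}$-derivative contracted with $y^i$. There is no conceptual difficulty; the remainder is routine differentiation and linear algebra with the transition functions $f^i_{\bar{i}}$ and $f^{\bar{i}}_i$.
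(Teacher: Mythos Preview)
Your proposal is correct and follows essentially the same approach as the paper: both arguments work in a standard coordinate chart, use the lift formula $\tilde{X}_i=f^{\bar{j}}_i\partial_{\bar{x}^{\bar{j}}}+(\partial_{\bar{x}^{\bar{k}}}f^{\bar{j}}_i)\bar{y}^{\bar{k}}\partial_{\bar{y}^{\bar{j}}}$, the coordinate expression of $c^k_{lj}$ via the transition functions, and the Killing condition $\tilde X_i(F^2)=0$ together with its $\bar{y}$-derivative to match the spray with $y^i\tilde X_i$ plus the vertical correction. The only difference is organizational: the paper converts each term of the coordinate spray into the Killing frame (its equations (\ref{1})--(\ref{3})) and then combines, whereas you push the Killing-frame side into bar coordinates and verify the resulting scalar identity; the computations are the same run in opposite directions.
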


 If we use the local coordinates $\{x=(\bar{x}^i)$ and $y=\bar{y}^{\bar{i}}\partial_{\bar{x}^{\bar{i}}}\}$, then a direct calculation shows that $c^k_{ij}$s can be presented as
\begin{equation}\label{0}
c^k_{ij}=[(f^{\bar{i}}_i\partial_{\bar{x}^{\bar{i}}}f^{\bar{j}}_j-
f^{\bar{i}}\partial_{\bar{x}^{\bar{i}}} f^{\bar{j}}_i)f^{k}_{\bar{j}}](x).
\end{equation}
Now consider the case that $M=G/H$ is a homogeneous Finsler space, where $H$ is the isotropy group of $x\in M$. Let the Killing vector fields $X_i$'s be defined by $\hat{v}_i\in\mathfrak{g}$,
$\forall i$. Then the tangent space $TM_x$
can be identified with the $n$-dimensional subspace $\mathfrak{m}$ spanned by the values of  all the $\hat{v}_i$'s at $x$. With respect to the decomposition
$\mathfrak{g}=\mathfrak{h}+\mathfrak{m}$,  there is a projection map ${\rm pr}:\mathfrak{g}\rightarrow\mathfrak{m}$. Note that for  the bracket operation $[\cdot,\cdot]$ on $\mathfrak{g}$, we have
$[\cdot,\cdot]_{\mathfrak{m}}={\rm pr}[\cdot,\cdot]$. Then $c^k_{ij}$s can be determined by
\begin{equation}\label{-2}
[\hat{v}_i,\hat{v}_j]_{\mathfrak{m}}=-c^k_{ij}\hat{v}_k.
\end{equation}

The proof of Theorem \ref{main-theorem-1} needs local coordinates $\{x=(\bar{x}^i)$ and $y=\bar{y}^{\bar{i}}\partial_{\bar{x}^{\bar{i}}}\}$ around $x$.
We first need to see how to present each $\tilde{X}_i$ with the local coordinates.

\begin{lemma}
For any vector field $X=f^{\bar{i}}\partial_{\bar{x}^{\bar{i}}}$ around $x$,
\begin{equation}\label{-1}
\tilde{X}(x,y)=f^{\bar{i}}\partial_{\bar{x}^{\bar{i}}}+\bar{y}^{\bar{i}}
\partial_{\bar{x}^{\bar{i}}}f^{\bar{j}}\partial_{\bar{y}^{\bar{j}}},
\end{equation}
for any $y=\bar{y}^{\bar{i}}\partial_{\bar{x}^{\bar{i}}}\in TM_x$.
\end{lemma}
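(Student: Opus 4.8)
The plan is to compute the natural lift $\tilde X$ of the vector field $X$ directly from its defining flow, expressed entirely in the local coordinates $\{\bar x^{\bar i},\bar y^{\bar i}\}$ on $TM$. Write $X=f^{\bar i}\partial_{\bar x^{\bar i}}$ near $x$, and let $\rho_t$ denote the local flow of $X$ on $M$, so that in coordinates $\rho_t(\bar x)=\bar x(t)$ with $\tfrac{d}{dt}\bar x^{\bar i}(t)=f^{\bar i}(\bar x(t))$ and $\bar x^{\bar i}(0)=\bar x^{\bar i}$. The induced flow on $TM$ is $\tilde\rho_t=d\rho_t$, which in the standard coordinates $(\bar x^{\bar i},\bar y^{\bar i})$ acts on a tangent vector $y=\bar y^{\bar i}\partial_{\bar x^{\bar i}}$ by
\begin{equation}
\tilde\rho_t(\bar x,\bar y)=\Bigl(\bar x(t),\ \frac{\partial \bar x^{\bar j}(t)}{\partial \bar x^{\bar i}}\,\bar y^{\bar i}\Bigr).
\end{equation}
By definition, $\tilde X(x,y)=\tfrac{d}{dt}\tilde\rho_t(\bar x,\bar y)\big|_{t=0}$, so I would differentiate each of the two components of this expression at $t=0$.

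For the base component, $\tfrac{d}{dt}\bar x^{\bar i}(t)\big|_{t=0}=f^{\bar i}(\bar x)$, which gives the first term $f^{\bar i}\partial_{\bar x^{\bar i}}$. For the fibre component I would interchange the $t$-derivative with the $\bar x^{\bar i}$-derivative and use the chain rule:
\begin{equation}
\frac{d}{dt}\Bigl(\frac{\partial \bar x^{\bar j}(t)}{\partial \bar x^{\bar i}}\Bigr)\Big|_{t=0}
=\frac{\partial}{\partial \bar x^{\bar i}}\Bigl(\frac{d}{dt}\bar x^{\bar j}(t)\Big|_{t=0}\Bigr)
=\frac{\partial f^{\bar j}}{\partial \bar x^{\bar i}}(\bar x),
\end{equation}
using that $\tfrac{\partial \bar x^{\bar j}(t)}{\partial \bar x^{\bar i}}\big|_{t=0}=\delta^{\bar j}_{\bar i}$. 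Multiplying by $\bar y^{\bar i}$ yields the fibre coefficient $\bar y^{\bar i}\,\partial_{\bar x^{\bar i}}f^{\bar j}$, which is the coefficient of $\partial_{\bar y^{\bar j}}$ in the claimed formula. Assembling the two pieces gives exactly \eqref{-1}, evaluated at the point $(x,y)$ as stated.

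The only delicate point is justifying the interchange of the $t$- and $\bar x$-derivatives in the fibre computation; this is legitimate because $f^{\bar i}$ is smooth and the flow $\rho_t$ depends smoothly on both $t$ and the initial point, so the map $(t,\bar x)\mapsto \bar x(t)$ is smooth and mixed partials commute. Everything else is a routine unwinding of the definitions of the complete lift of a vector field, and no real obstacle arises. One could alternatively cite the standard formula for the complete (or tangent) lift of a vector field to $TM$, but carrying out the short direct computation keeps the argument self-contained.
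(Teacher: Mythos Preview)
Your proof is correct and follows essentially the same approach as the paper: both compute $\tilde X$ by differentiating the lifted flow $\tilde\rho_t=d\rho_t$ at $t=0$ in the coordinates $(\bar x^{\bar i},\bar y^{\bar i})$. The paper phrases the fibre part via a first-order Taylor expansion of $\tilde\rho_t(\partial_{\bar x^{\bar i}}|_x)$ in $t$, whereas you write the fibre component as the Jacobian $\partial\bar x^{\bar j}(t)/\partial\bar x^{\bar i}$ and differentiate using the variational equation; these are the same computation in slightly different notation.
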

\begin{proof}
Let $\rho_t$ and $\tilde{\rho}_t$ be the flows of diffeomorphisms $X$ generates on $M$ and $TM$, respectively. For each $i$, the flow curve $\tilde{\rho}_t(\partial_{\bar{x}^{\bar{i}}}|_x)$ can be presented as
\begin{equation}
(\rho_t(x),\partial_{\bar{x}^{\bar{i}}}+t\partial_{\bar{x}^{\bar{i}}} f_{\bar{j}}\partial_{\bar{x}^{\bar{j}}}+o(t)),
\end{equation}
so the flow curve $\tilde{\rho}_t(x,y)$ for $y=\bar{y}^{\bar{i}}\in TM_x$
has the local coordinates
\begin{equation}
(\rho_t (x), [\bar{y}^{\bar{j}}+
t\bar{y}^{\bar{i}}\partial_{\bar{x}^{\bar{i}}}f^{\bar{j}}+o(t)]\partial_{\bar{x}^{\bar{j}}}).
\end{equation}
Differentiating  with respect to $t$ and considering the values  at $t=0$, we get (\ref{-1}).
\end{proof}

Now we use the above lemma to recalculate the terms of the geodesic spray
\begin{equation}
G=\bar{y}^{\bar{i}}\partial_{\bar{x}^{\bar{i}}}-\frac{1}{2}\bar{g}^{\bar{i}\bar{l}}
([F^2]_{\bar{x}^{\bar{j}}\bar{y}^{\bar{l}}}\bar{y}^{\bar{j}}-[F^2]_{\bar{x}^{\bar{l}}})
\partial_{\bar{y}^{\bar{i}}}.
\end{equation}
By (\ref{-1}) and the property that $\tilde{X}_i(F)=0$, $\forall i$, we have the following equations which hold on a neighborhood around $x$:
\begin{eqnarray}\label{1}
\bar{y}^{\bar{i}}\partial_{\bar{x}^{\bar{i}}}
&=& y^i f^{\bar{i}}_i\partial_{\bar{x}^{\bar{i}}}\nonumber\\
&=& y^i(\tilde{X}_i-\bar{y}^{\bar{i}}\partial_{\bar{x}^{\bar{i}}} f^{\bar{j}}_i \partial_{\bar{y}^{\bar{j}}})\nonumber\\
&=& y^i\tilde{X}_i-f^{\bar{i}}_k\partial_{\bar{x}^{\bar{i}}}f^{\bar{j}}_i f^l_{\bar{y}^{\bar{j}}}y^i y^k\partial_{y^l}\nonumber\\
&=& y^i\tilde{X}_i-f^{\bar{i}}_k\partial_{\bar{x}^{\bar{i}}}f^{\bar{j}}_j
f^i_{\bar{j}}y^j y^k\partial_{y^i},
\end{eqnarray}
\begin{eqnarray}\label{2}
\bar{g}^{\bar{i}\bar{l}}[ F^2 ]_{\bar{x}^{\bar{l}}}\partial_{\bar{y}^{\bar{i}}}
&=& g^{il}f^{\bar{l}}_l [F^2]_{\bar{x}^{\bar{l}}}\partial_{y^i}\nonumber\\
&=& -g^{il}(y^{\bar{i}}\partial_{\bar{x}^i} f^{\bar{j}}_l
[F^2]_{\bar{y}^{\bar{j}}})\partial_{y^i}\nonumber\\
&=& -g^{il} f^{\bar{i}}_j\partial_{\bar{x}^{\bar{i}}}f^{\bar{j}}_l f^k_{\bar{j}}
[F^2]_{y^k} y^j\partial_{y^i},
\end{eqnarray}
and
\begin{eqnarray}\label{3}
\bar{g}^{\bar{i}\bar{l}}[F^2]_{\bar{x}^{\bar{j}}\bar{y}^{\bar{l}}}\bar{y}^{\bar{j}}
\partial_{\bar{y}^{\bar{i}}} &=&
g^{il}[f^{\bar{j}}_j [F^2]_{\bar{x}^{\bar{j}}}]_{y^l} y^j\partial_{y^i}\nonumber\\
&=& -g^{il}(\bar{y}^{\bar{i}}\partial_{\bar{x}^{\bar{i}}} f^{\bar{j}}_j
[F^2]_{\bar{y}^j})_{y^l}y^j\partial_{y^i}\nonumber\\
&=& -g^{il} f^{\bar{i}}_l\partial_{\bar{x}^i}f^{\bar{j}}_j [F^2]_{\bar{y}^{\bar{j}}} y^j
\partial_{y^i}
-g^{il}f^{\bar{i}}_k\partial_{\bar{x}^i} f^{\bar{j}}_j f^h_{\bar{j}}
[F^2]_{y^h y^l} y^j y^k\partial_{y^i}\nonumber\\
&=& -g^{il}f^{\bar{i}}_l\partial_{\bar{x}^{\bar{i}}}f^{\bar{j}}_j f^k_{\bar{j}}
[F^2]_{y^k}y^j\partial_{y^i} - 2f^{\bar{i}}_k\partial_{\bar{x}^{\bar{i}}}
f^{\bar{j}}_j f^i_{\bar{j}} y^j y^k\partial_{y^i}.
\end{eqnarray}
By (\ref{1})-(\ref{3}) and  (\ref{0}), we get
\begin{eqnarray}
G(x,y) &=& y^i \tilde{X}_i+\frac{1}{2}g^{il}(f^{\bar{i}}_l
\partial_{\bar{x}}^{\bar{i}}-f^{\bar{i}}_j\partial_{\bar{x}^{\bar{i}}}f^{\bar{j}}_l
)f^k_{\bar{j}}[F^2]_{y^k}y^j\partial_{y^i}\nonumber\\
&=& y^i\tilde{X}_i+\frac{1}{2}g^{il}c^{k}_{lj}[F^2]_{y^k}y^j\partial_{y^i}.
\end{eqnarray}
This completes the proof of Theorem \ref{main-theorem-1}.

\section{The formula of S-curvature for a  homogeneous Finsler space}

The formula (\ref{geodesic-spray-formula}) of the geodesic spray can be immediately applied to get a formula of S-curvature. Suppose in a  local coordinate system,  $x=(\bar{x}^{\bar{i}})$ and
$y=\bar{y}^{\bar{i}}\partial_{\bar{x}^{\bar{i}}}\in TM_x$, with $y\neq 0$. Then the distortion
function is defined by
\begin{equation}
\tau(x,y)=\ln\frac{\sqrt{\det(\bar{g}_{\bar{p}\bar{q}})}}{\sigma(x)},
\end{equation}
where $\sigma(x)$ is defined by
$$\sigma(x)=\frac{\mbox{Vol}(B^n)}{\mbox{Vol}\{(y^i)\in {\Bbb
R}^n|F_x(y^ib_i)<1\}},$$
where Vol means the volume of a subset in
the standard Euclidean space ${\Bbb R}^n$ and $B^n$ is the open
ball of radius $1$. The function $\sigma(x)$ can be used to define the Busemann-Hausdorff volume $\sigma(x)d\bar{x}^1\cdots
\bar{x}^n$.
The S-curvature of the nonzero tangent vector $(x,y)$, denoted as $S(x,y)$,  is defined to be  the derivative of $\tau$ in
the direction of the geodesics of $G(x,y)$, with initial vector $y$.

Notice that the distortion function $\tau(x,y)$ is only determined by the metric $F$, not
relevant to the choice of local coordinates or frames.
If there is Killing frame $\{X_1,\ldots,X_n\}$ around $x$, then it is not hard to see that
$\tilde{X}_i \tau=0$, $\forall i$. Thus only the derivative of $\tau$ in the direction
$\frac{1}{2}g^{il}c^k_{lj}[F^2]_{y^k}y^j\partial_{y^i}$ remains to appear in the S-curvature formula.
Notice also that in the expression of $\tau$, $\sigma(x)$ is a function of $x$  only. This observation leads to   the following formula for the S-curvature.
\begin{theorem}
Let $\{X_1,\ldots,X_n\}$ be a Killing frame around $x$, then for any $y\neq 0$ in $TM_x$, the S-curvature at $(x,y)$ can be presented with the notations for the Killing frame as
\begin{equation}\label{S-curvature-formula-1}
S(x,y)=\frac{1}{2}g^{il}c^k_{lj}[F^2]_{y^k}y^j I_i,
\end{equation}
where $I_i=[\ln\sqrt{\det(g_{pq})}]_{y^i}$ are the coefficients of the mean Cartan torsion with respect to the basis the Killing frame induced in $TM_x$.
\end{theorem}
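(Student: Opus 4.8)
The plan is to feed the geodesic spray formula (\ref{geodesic-spray-formula}) of Theorem \ref{main-theorem-1} into the definition of the S-curvature. Recall that if $\gamma$ is the geodesic with $\gamma(0)=x$ and $\dot\gamma(0)=y$, then $t\mapsto(\gamma(t),\dot\gamma(t))$ is exactly the integral curve through $(x,y)$ of the spray vector field on $TM\setminus\{0\}$, whose value at $(x,y)$ is $G(x,y)$. Since by definition $S(x,y)=\frac{d}{dt}\tau(\gamma(t),\dot\gamma(t))|_{t=0}$, this says that $S(x,y)$ is the derivative of the function $\tau$ on $TM\setminus\{0\}$ in the direction of the tangent vector $G(x,y)$. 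Substituting (\ref{geodesic-spray-formula}) then gives
\begin{equation}
S(x,y)=y^i\,\tilde{X}_i(\tau)+\tfrac{1}{2}g^{il}c^k_{lj}[F^2]_{y^k}y^j\,\partial_{y^i}(\tau),
\end{equation}
and it remains to evaluate the two kinds of terms $\tilde{X}_i(\tau)$ and $\partial_{y^i}(\tau)$.

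First I would argue that $\tilde{X}_i(\tau)=0$ for every $i$. The distortion $\tau:TM\setminus\{0\}\to\mathbb{R}$ is an invariant of the metric $F$ alone, not of any chosen coordinates or frame; in particular every isometry $\phi$ of $(M,F)$ satisfies $\tau\circ d\phi=\tau$, since its differential preserves both the fundamental tensor and the Busemann--Hausdorff volume of $F$. Taking $\phi=\rho_t$, the local flow generated by the Killing field $X_i$, this gives $\tau\circ\tilde{\rho}_t=\tau$ for all small $t$; differentiating at $t=0$ yields $\tilde{X}_i(\tau)=0$. It is important to pass through the intrinsic function $\tau$ here: the numerator $\sqrt{\det(g_{pq})}$ by itself is frame-dependent and need not be flow-invariant, whereas the quotient defining $\tau$ is.

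Next I would compute $\partial_{y^i}(\tau)$. Writing $\tau=\ln\sqrt{\det(g_{pq})}-\ln\sigma(x)$ in the $y$-coordinates induced by the Killing frame, the second term is a function of $x$ only, so $\partial_{y^i}(\tau)=\partial_{y^i}\ln\sqrt{\det(g_{pq})}$. By Jacobi's formula for the derivative of a determinant together with $g_{pq}=\tfrac12[F^2]_{y^py^q}$, this equals $\tfrac12 g^{pq}\partial_{y^i}g_{pq}=g^{pq}C_{ipq}=I_i$, where $C_{ipq}=\tfrac14[F^2]_{y^iy^py^q}$ and $I_i$ is precisely the mean Cartan torsion expressed in the basis $\{X_1(x),\ldots,X_n(x)\}$ of $TM_x$. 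Substituting $\tilde{X}_i(\tau)=0$ and $\partial_{y^i}(\tau)=I_i$ into the displayed formula for $S(x,y)$ yields (\ref{S-curvature-formula-1}).

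Almost all of this is bookkeeping with the spray formula already in hand; the one point requiring genuine care is the vanishing $\tilde{X}_i(\tau)=0$. I expect the main obstacle to be making this precise, i.e. establishing cleanly that $\tau$ descends to a well-defined function on $TM\setminus\{0\}$ and that the lifted Killing flows $\tilde{\rho}_t$, being differentials of isometries, preserve it, so that only the vertical derivative along $\partial_{y^i}$ contributes to $S(x,y)$.
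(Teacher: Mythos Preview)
Your proposal is correct and follows essentially the same route as the paper: apply the spray formula (\ref{geodesic-spray-formula}) to the definition $S(x,y)=G(x,y)(\tau)$, kill the $\tilde{X}_i$-terms using the fact that $\tau$ is an intrinsic quantity preserved by isometries, and observe that $\sigma(x)$ is independent of $y$ so that the remaining vertical derivative gives $I_i$. You are more explicit than the paper on both points (the flow argument for $\tilde{X}_i(\tau)=0$ and Jacobi's formula for $\partial_{y^i}\tau=I_i$), but the underlying argument is the same.
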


Now assume $M=G/H$ is homogeneous, with $H$ being the isotropy group at $x$. In Section 3 we have seen the existence of Killing frames around $x$. Each Killing frame $\{X_1,\ldots,X_n\}$ determines a decomposition $\mathfrak{g}=\mathfrak{h}+\mathfrak{m}$,
where $X_i$ is determined by $\hat{v}_i$ in $\mathfrak{m}$. Let
the operation $[\cdot,\cdot]_\mathfrak{m}$ be defined as before.
The gradient field of $\ln\sqrt{\det(g_{pq})}$ with respect to the fundamental tensor on $TM_x\backslash 0$ is the $\mathfrak{m}$-valued function
\begin{equation}
g^{il}I_i \hat{v}_l=g^{il}[\ln\sqrt{\det(g_{pq})}]_{y^i}\hat{v}_l.
\end{equation}
We will denote it as $\nabla^{g_{ij}} \ln\sqrt{\det(g_{pq})}(y)$ for $y\in\mathfrak{m}$. Let $\langle\cdot , \cdot\rangle_y$ be the inner product defined by the fundamental tensor $g_{ij}$ at $y$. Then by (\ref{-2}) we can rewrite (\ref{S-curvature-formula-1}) as
\begin{eqnarray}\label{S-curvature-formula-2}
S(x,y) &=& g^{il}c^k_{lj}g_{kh} y^h y^j I_i\nonumber\\
&=& \langle [y,\nabla^{g_{ij}}\ln\sqrt{\det(g_{pq})}(y)]_\mathfrak{m},y\rangle_y,
\end{eqnarray}
which gives a more beautiful formula for the S-curvature of a homogenous Finsler space.
The formula (\ref{S-curvature-formula-2}) is relevant to the choice of $\mathfrak{m}$
rather than the specified basis of $\mathfrak{m}$ to generate the Killing frame. To summarize, we have proved the following
\begin{theorem}\label{main-theorem-2}
Let $M$ be a homogeneous Finsler space $G/H$, where $H$ is the isotropy
subgroup of $x\in M$. Fix any complement $\mathfrak{m}$ of $\mathfrak{h}$ in $\mathfrak{g}$, with the corresponding $[\cdot,\cdot]_\mathfrak{m}$. Then for any nonzero $y\in \mathfrak{m}=TM_x$, we have
\begin{equation}
S(x,y)=\langle [y,\nabla^{g_{ij}}\ln\sqrt{\det(g_{pq})}(y)]_\mathfrak{m},y\rangle_y.
\end{equation}
\end{theorem}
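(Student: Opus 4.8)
The plan is to derive Theorem \ref{main-theorem-2} as a direct consequence of the S-curvature formula (\ref{S-curvature-formula-1}) established in the previous theorem, by two successive moves: first rewriting (\ref{S-curvature-formula-1}) in a coordinate-invariant way using the fundamental tensor at $y$, and then re-encoding the structure constants $c^k_{lj}$ via the bracket $[\cdot,\cdot]_\mathfrak{m}$ using (\ref{-2}). Since the existence of a Killing frame around $x$ for the homogeneous space $G/H$ was already exhibited in Section 3 (take any basis $\{v_1,\ldots,v_n\}$ of a chosen complement $\mathfrak{m}$, lift to $\{\hat v_1,\ldots,\hat v_n\}\subset\mathfrak{g}$, and let $X_i$ be the Killing field generated by $\hat v_i$), formula (\ref{S-curvature-formula-1}) applies verbatim, and the only work is algebraic bookkeeping.

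\textbf{Step 1: pass from $[F^2]_{y^k}$ to the inner product $\langle\cdot,\cdot\rangle_y$.} By positive $1$-homogeneity of $F$ and Euler's theorem, $\tfrac12[F^2]_{y^k}y^k=F^2$ and more to the point $\tfrac12[F^2]_{y^k}=g_{kh}y^h$, where $g_{kh}=g_{kh}(y)$ is the fundamental tensor evaluated at $y$. Substituting this into (\ref{S-curvature-formula-1}) gives
\begin{equation*}
S(x,y)=g^{il}c^k_{lj}g_{kh}y^h y^j I_i.
\end{equation*}
This is exactly the first line of (\ref{S-curvature-formula-2}).

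\textbf{Step 2: interpret the contraction as a bracket paired against $y$.} Introduce the $\mathfrak{m}$-valued quantity $\nabla^{g_{ij}}\ln\sqrt{\det(g_{pq})}(y):=g^{il}I_i\hat v_l$, the gradient of the log-density with respect to the fundamental tensor at $y$ (here $I_i=[\ln\sqrt{\det(g_{pq})}]_{y^i}$ are the mean Cartan torsion coefficients in the Killing-frame basis). Using (\ref{-2}), which says $[\hat v_i,\hat v_j]_\mathfrak{m}=-c^k_{ij}\hat v_k$, we compute the $\mathfrak{m}$-bracket
\begin{equation*}
[y,\nabla^{g_{ij}}\ln\sqrt{\det(g_{pq})}(y)]_\mathfrak{m}
=[y^j\hat v_j,\,g^{il}I_i\hat v_l]_\mathfrak{m}
=y^j g^{il}I_i\,[\hat v_j,\hat v_l]_\mathfrak{m}
=-y^j g^{il}I_i c^k_{jl}\hat v_k .
\end{equation*}
Pairing with $y$ via $\langle\cdot,\cdot\rangle_y$ (so that $\langle\hat v_k,y\rangle_y=g_{kh}y^h$) and noting the antisymmetry $c^k_{jl}=-c^k_{lj}$ converts the last display into $g^{il}c^k_{lj}g_{kh}y^h y^j I_i$, matching Step 1. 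This yields the asserted formula $S(x,y)=\langle[y,\nabla^{g_{ij}}\ln\sqrt{\det(g_{pq})}(y)]_\mathfrak{m},y\rangle_y$.

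\textbf{Step 3: independence of the basis.} Finally I would check that the right-hand side depends only on the complement $\mathfrak{m}$ and not on the chosen basis $\{v_1,\ldots,v_n\}$ of $\mathfrak{m}$ used to build the Killing frame. A change of basis is a constant matrix $A\in GL(n)$; under it the $X_i$, the $y^i$, the $g_{ij}$, the $I_i$ and the $c^k_{ij}$ all transform tensorially, and both $\nabla^{g_{ij}}\ln\sqrt{\det(g_{pq})}(y)$ (an element of $\mathfrak{m}$) and the inner product $\langle\cdot,\cdot\rangle_y$ (a bilinear form on $TM_x=\mathfrak{m}$) are manifestly basis-free objects; the bracket $[\cdot,\cdot]_\mathfrak{m}=\mathrm{pr}\circ[\cdot,\cdot]$ depends only on the splitting $\mathfrak{g}=\mathfrak{h}+\mathfrak{m}$. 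Hence the scalar $S(x,y)$ computed by the formula is well defined, completing the proof.

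The computations in Steps 1–2 are routine index manipulations; the only place demanding care is the sign and index-ordering discipline — in particular the minus sign in (\ref{-2}) and the antisymmetry $c^k_{lj}=-c^k_{jl}$ must be tracked so that the two expressions genuinely coincide rather than differing by a sign. That bookkeeping, together with making explicit that $g_{ij}$ and $I_i$ are evaluated at the same $y$ appearing in the bracket, is the main (minor) obstacle; there is no deeper difficulty since Theorem with formula (\ref{S-curvature-formula-1}) already does the geometric heavy lifting.
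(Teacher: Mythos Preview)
Your proposal is correct and follows essentially the same route as the paper: starting from (\ref{S-curvature-formula-1}), you use $\tfrac12[F^2]_{y^k}=g_{kh}y^h$ to obtain $g^{il}c^k_{lj}g_{kh}y^hy^jI_i$, introduce the gradient $\nabla^{g_{ij}}\ln\sqrt{\det(g_{pq})}(y)=g^{il}I_i\hat v_l$, and invoke (\ref{-2}) together with the antisymmetry of $c^k_{lj}$ to recast the expression as $\langle[y,\nabla^{g_{ij}}\ln\sqrt{\det(g_{pq})}(y)]_\mathfrak{m},y\rangle_y$, finally noting basis independence. Your sign bookkeeping in Step~2 is in fact a bit more explicit than the paper's, but the argument is the same.
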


A Finsler metric $F$ is said to have isotropic S-curvature when the S-curvature $S=(n+1)c(x)F$ for some function $c(x)$ on $M$. An immediate application of Theorem \ref{main-theorem-2} is
the following corollary.
\begin{corollary}
A homogeneous Finsler space is of isotropic S-curvature if and only if it has vanishing  S-curvature.
\end{corollary}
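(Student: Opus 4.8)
The plan is to combine the explicit formula (\ref{S-curvature-formula-1}) with the compactness of the indicatrix. I first note that on a homogeneous space the isotropic condition forces $c(x)$ to be constant: any isometry $g\in I(M,F)$ preserves both $F$ and the S-curvature, so $S(g_*v)=S(v)$ and $F(g_*v)=F(v)$ give $c(gx)=c(x)$, and transitivity of $I(M,F)$ makes $c$ a constant $c$. It remains to show $c=0$.

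Fix $x$ and a Killing frame $\{X_1,\ldots,X_n\}$ around $x$, with the induced $y$-coordinates on $T_xM=\mathfrak{m}$, the functions $g_{ij}(y)$, the structure constants $c^k_{lj}$, and the smooth function $\Phi(y):=\ln\sqrt{\det(g_{pq}(y))}$ on $\mathfrak{m}\setminus\{0\}$. By (\ref{S-curvature-formula-1}) we have $S(x,y)=(d\Phi)_y\big(V(y)\big)$, where $V=V^i\partial_{y^i}$ with $V^i=\tfrac12 g^{il}c^k_{lj}[F^2]_{y^k}y^j$ is exactly the vertical part of the geodesic spray (\ref{geodesic-spray-formula}). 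The key observation is that $V$ is tangent to every level set of $F$, in particular to the indicatrix $\Sigma_x=\{y\in\mathfrak{m}:F_x(y)=1\}$: using Euler's relation $g^{il}[F^2]_{y^i}=2y^l$ one gets $F_{y^i}V^i=\tfrac{1}{2F}c^k_{lj}y^ly^j[F^2]_{y^k}$, which vanishes since $c^k_{lj}=-c^k_{jl}$ (from $[X_l,X_j]=-[X_j,X_l]$).

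Since $\Sigma_x$ is a compact manifold, the flow of $V|_{\Sigma_x}$ is complete. Along an integral curve $y(t)\subset\Sigma_x$ the isotropic hypothesis and $F\equiv1$ on $\Sigma_x$ give
\[
\tfrac{d}{dt}\Phi(y(t))=(d\Phi)_{y(t)}\big(V(y(t))\big)=S(x,y(t))=(n+1)c\,F(x,y(t))=(n+1)c ,
\]
so $\Phi(y(t))=\Phi(y(0))+(n+1)ct$. If $c\neq0$ this is unbounded as $t\to+\infty$, contradicting the boundedness of the continuous function $\Phi$ on the compact set $\Sigma_x$; hence $c=0$, i.e.\ $S\equiv0$.

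Conceptually this is the geodesic-flow argument restricted to a single fiber: along a unit-speed geodesic the base-point motion in the Killing frame is realized by an isometry and so does not change the distortion $\tau$, which means the growth of $\tau$ is governed entirely by the vertical field $V$, whose flow on the compact indicatrix cannot produce linear unbounded growth. The only real point to verify carefully is the tangency of $V$ to $\Sigma_x$ — that is what guarantees completeness of the flow and lets compactness finish the argument; the rest is immediate. (Equivalently one could argue invariantly from (\ref{S-curvature-formula-2}), using that $\tau$ is $G$-invariant and hence bounded on the unit tangent bundle, together with completeness of the homogeneous Finsler space.)
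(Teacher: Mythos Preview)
Your argument is correct, but it takes a different route from the paper's.  The paper observes that $\Phi=\ln\sqrt{\det(g_{pq})}$ is positively homogeneous of degree~$0$ and therefore attains an extremum at some $y_0\in\Sigma_x$; since $y^i\Phi_{y^i}=0$ while $y^iF_{y^i}=F\neq0$, the Lagrange multiplier must vanish, so the \emph{full} gradient $\nabla^{g_{ij}}\Phi(y_0)$ is zero.  Formula~(\ref{S-curvature-formula-2}) then gives $S(x,y_0)=\langle[y_0,0]_{\mathfrak m},y_0\rangle_{y_0}=0$, whence $c(x)=0$.  Your argument instead verifies that the vertical field $V$ is tangent to $\Sigma_x$ (your computation of $F_{y^i}V^i=0$ via $g^{il}[F^2]_{y^i}=2y^l$ and the skew-symmetry of $c^k_{lj}$ is correct) and then integrates $\dot\Phi=(n+1)c$ along a complete flow line to reach a contradiction with boundedness.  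Both proofs use compactness of $\Sigma_x$ and the identification $S=V\Phi$ from~(\ref{S-curvature-formula-1}); the paper's static critical-point argument is shorter and avoids the tangency check, while your dynamical version makes the link to the geodesic-flow interpretation of S-curvature more explicit.  One minor remark: your opening paragraph showing $c(x)$ is constant is unnecessary, since the rest of your proof already establishes $c(x)=0$ pointwise at an arbitrary fixed $x$.
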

\begin{proof}
We  need only consider the S-curvature at a fixed point $x$.
The function $\ln\sqrt{\det(g_{pq})}$ is homogeneous of degree 0, so it must reach its maximum or minimum at some nonzero $y$, where the gradient field vanishes. Then by (\ref{S-curvature-formula-2}), $S(x,y)=0$. If the S-curvature is isotropic, i.e.,if $S=(n+1)c(x)F$, then $c(x)$ must be 0. This proves the "only if" part of the corollary.  The "if" part is obvious.
\end{proof}

As another corollary, we obtain a important property of homogeneous Einstein Finsler spaces. Recall that an Einstein Finsler space must have constant S-curvature (see \cite{BR}). Therefore we have

\begin{corollary}
A Homogeneous Einstein Finsler space must have vanishing S-curvature.
\end{corollary}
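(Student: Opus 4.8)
The plan is to combine the cited theorem of Bácsó--Rung with the corollary just established. First I would unpack the Einstein hypothesis: if $(M,F)$ is an Einstein Finsler space, then by \cite{BR} its S-curvature is constant, that is, $S=(n+1)cF$ for some real number $c$. Reading this against the definition given above, a constant is in particular a function $c(x)$ on $M$, so $(M,F)$ has isotropic S-curvature. This is the only step that calls for a moment's care — one must make sure that ``constant S-curvature'' in the sense of \cite{BR} really is a special case of ``isotropic S-curvature'' as defined here — and it is completely routine.

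Next I would invoke the preceding corollary: a homogeneous Finsler space of isotropic S-curvature has vanishing S-curvature. Since $(M,F)$ is assumed homogeneous and we have just seen it has isotropic S-curvature, this applies verbatim and gives $S\equiv 0$, which is the assertion.

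There is no genuine obstacle here; the statement is a formal consequence of the two inputs. If one wants a self-contained argument rather than a citation chain, one can simply trace through the proof of the corollary: fix $x\in M$, and note that $\ln\sqrt{\det(g_{pq})}$ is a degree-zero function on $\mathfrak{m}\setminus\{0\}$, hence attains an extremum at some $y_0\neq 0$, where its gradient with respect to the fundamental tensor vanishes; formula (\ref{S-curvature-formula-2}) then forces $S(x,y_0)=0$, and since $S=(n+1)cF$ with $F(y_0)\neq 0$ this yields $c=0$, so $S\equiv 0$ everywhere.
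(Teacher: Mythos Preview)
Your argument is correct and matches the paper's own reasoning exactly: the paper simply cites \cite{BR} for the fact that an Einstein Finsler space has constant S-curvature and then invokes the preceding corollary. Your additional self-contained unpacking is more detailed than what the paper provides, but the underlying approach is identical.
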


\section{Homogeneous Randers space}

In this last section, we will apply the S-curvature formula (\ref{S-curvature-formula-2}) to calculate the S-curvatures of
homogeneous Randers spaces.  It turns out that the resulting formula coincides with the one given in \cite{De}. Although the situation will be a little more complicated, the same technique can be  transported to the $(\alpha,\beta)$-metrics, and we can get the same formula as in \cite{DW}. Though the calculation here seems longer, it is still acceptable.  More importantly, the calculation which does not use the
complicated S-curvature curvature formula for Randers metric or $(\alpha,\beta)$-metrics at all. Hopefully, With the help of (\ref{S-curvature-formula-2}), we can obtain the explicit formula of S-curvature for  many more generalized classes of homogeneous Finsler metrics.

Let $F=\alpha+\beta$ be a homogeneous Randers metric on $M=G/H$, where $H$ is the isotropy group of $x\in TM$. Let $\mathfrak{m}$ be a complement of $\mathfrak{h}$ in $\mathfrak{g}$. Identify $\mathfrak{m}$  with $TM_x$ as above.
Then $\alpha$ is determined by an linear metric on $\mathfrak{m}$ and $\beta$ is determined by a vector in $\mathfrak{m}^*$, which is invariant under the adjoint action of $H$. We still denote them as $\alpha$ and $\beta$. Let $\langle,\rangle$ be the inner product induced by $\alpha$ gives on $\mathfrak{m}$ and suppose $\beta$ is defined by $\beta(\cdot)=\langle\cdot,u\rangle$, where $u\in\mathfrak{m}$ is a fixed vector of $H$ (see \cite{De}).

For each $v\in TM_x=\mathfrak{m}$ with $\alpha(v)=1$,  we choose an orthonormal basis $\{\hat{v}_1,\ldots,\hat{v}_n\}$ of $\mathfrak{m}$ with respect to $\alpha$, such that $u=b\hat{v}_1$ and $v=a\hat{v}_1+a'\hat{v}_2$ with $a^2+a'^2=1$. Then $\alpha$ on $\mathfrak{m}$ is simplified as $\alpha=\sqrt{{y^1}^2+\cdots+{y^n}^2}$ and $\beta=by^1$.
All the notations are defined for this Killing
frame around $x$ defined by the $\hat{v}_i$s.

The fundamental tensor at $v$ is given by
\begin{eqnarray*}
g_{11} &=& 1+b^2+3ba-ba^3, \\
g_{12} &=& ba'^3, \\
g_{22} &=& 1+ b a^3, \\
g_{ii} &=& 1+ b a,
\end{eqnarray*}
and all other $g_{ij}$ vanishes at $v$.
Similarly, the tensor $g^{ij}$ is given by
\begin{eqnarray*}
g^{11} &=& (1+ba)^{-3} (1+ba^3),\\
g^{12} &=& -(1+ba)^{-3} ba'^3,\\
g^{22} &=& (1+ba)^{-3} (1+b^2+3ba-ba^3),\\
g^{ii} &=& (1+ba)^{-1},
\end{eqnarray*}
and all other $g^{ij}$ vanishes at $v$.
The coefficients of the Cartan torsion are then given by
\begin{eqnarray*}
I_1 &=& \frac{n+1}{2(1+ba)}ba'^2,\\
I_2 &=& -\frac{n+1}{2(1+ba)}baa',
\end{eqnarray*}
and all other $I_i$s vanish as $v$.
Therefore  at the vector $v$, we have
\begin{eqnarray*}
\nabla^{g_{ij}}\ln\sqrt{\det(g_{pq})}&=&\frac{(n+1)ba'^2}{2(1+ba)^3}\hat{v}_1-
\frac{(n+1)ba'(a+b)}{2(1+ba)^3}\hat{v}_2\nonumber\\
\label{4} &=&\frac{n+1}{2(1+ba)^2}u-\frac{(n+1)b(a+b)}{2(1+ba)^3}v,
\end{eqnarray*}
and
\begin{equation}\label{5}
\langle [v,\nabla^{g_{ij}}\ln\sqrt{\det(g_{pq})}(v)]_\mathfrak{m},v \rangle_v
=\frac{n+1}{2(1+ba)^2}\langle [v,u]_\mathfrak{m},v\rangle_v.
\end{equation}

The calculation of the fundamental tensor at $v$ indicates that
\begin{eqnarray*}
\langle [v,u]_\mathfrak{m},v \rangle_v =
ac g_{11}+(ad+a'c)g_{12}+a'd g_{22},
\end{eqnarray*}
where $[v,u]_{\mathfrak{m}}=c\hat{v}_1+d\hat{v}_2$, i.e.,
\begin{eqnarray}
\langle [y,u]_\mathfrak{m},u \rangle &=& bc, \\
\langle [y,u]_{\mathfrak{m}},v \rangle &=& ac+a'd.
\end{eqnarray}
Thus we have $a'bd=b\langle [v,u]_{\mathfrak{m}},v \rangle-a\langle [v,u]_\mathfrak{m},u \rangle$, and
\begin{eqnarray}
& &\langle [v,\nabla^{g_{ij}}\ln\sqrt{\det(g_{pq})}(v)]_\mathfrak{m},v \rangle_v\nonumber\\
&=& \frac{n+1}{2(1+ba)^2}[ac(1+b^2+3ba-ba^3)+(ad+a'c)ba'^3+a'd(1+ba^3)]\nonumber\\
&=& \frac{n+1}{2(1+ba)^2}[(ac+a'd)+(ab+3a^2-a^4-a^2 a'^2+a'^4)bc+(aa'^2+a^3)abd]\nonumber\\
&=& \frac{n+1}{2(1+ba)}(\langle [v,u]_\mathfrak{m},u \rangle+
\langle [v,u]_{\mathfrak{m}},v \rangle).
\end{eqnarray}
Notice that $F(v)=1+ba$ and $\alpha(v)=1$. Therefore for general $v$,
the homogeneity of the S-curvature indicates that the formula should be adjusted to
\begin{equation}
S(x,v)=\frac{n+1}{2F(v)}(\alpha(v)\langle [v,u]_\mathfrak{m},u \rangle+
\langle [v,u]_{\mathfrak{m}},v \rangle).
\end{equation}
This formula coincides with the one obtained  in \cite{De}.

\end{document}